\documentclass{amsart}

\usepackage{amsmath,amssymb}
\usepackage{tikz,graphicx}
\usetikzlibrary{shapes.geometric}

\usepackage[ruled,vlined]{algorithm2e}
\SetKwProg{Function}{Function}{}{}
\SetKwInOut{Input}{Input}
\SetKwInOut{Output}{Output}

\usepackage{hyperref}

\newcommand{\Z}{\mathbb{Z}}
\newcommand{\Q}{\mathbb{Q}}
\newcommand{\K}{\mathbf{K}}
\newcommand{\OK}{\mathcal{O}_K}
\newcommand{\Norm}{\mathrm{Norm}}

\newcommand{\tarc}{\mbox{\large$\frown$}}
\newcommand{\arc}[1]{\stackrel{\tarc}{#1}}

\newtheorem{lemma}{Lemma}[section]

\newtheorem{proposition}[lemma]{Proposition}

\theoremstyle{definition}

\theoremstyle{remark}

\begin{document}

\title{Division algorithms for norm-Euclidean imaginary quadratic fields}
\author{Fran\c{c}ois MORAIN}
\thanks{LIX, CNRS, INRIA, \'Ecole Polytechnique, Institut
Polytechnique de Paris, Palaiseau, France}
\email{morain@lix.polytechnique.fr}
\date{\today}

\maketitle

\begin{abstract}
The list of norm-Euclidean imaginary quadratic fields
is known and finite. For each known case, we give a division algorithm
that finds a remainder at distance less than the Euclidean minimum of
the field.
\end{abstract}

\section{Introduction}

Let $\K$ be a number field, $\OK$ its ring of integers and $\Norm(\xi)$ the
norm of an element $\xi\in \K$.
Not all $\OK$ are norm-Euclidean. Depending on the signature of $\K$,
there may be a finite or (conjectured) infinite number of such
fields. A necessary reading is \cite{Lemmermeyer1995}.
The {\em Euclidean minimum} of $\xi \in \K$ is given by
$$m_\K(\xi) = \mathrm{inf}\{|\Norm(\xi - \gamma)|, \gamma \in \OK\}.$$
The {\em Euclidean minimum} of $\K$ is
$$M(\K) = \mathrm{sup}\{m_\K(\xi), \xi \in \K\}.$$
See \cite{Cerri2007,Lezowski2014} for efficient algorithms
to compute the euclidean minimum for elements (resp. fields) of rather
large degree. 

Many proofs in the literature are targeted to the proof that
$M(\K) < 1$ and sometimes exhibit an algorithm that, given $\xi \in \K$,
finds a $\gamma$ such that $|\Norm(\xi - \gamma)| < 1$. Such an
algorithm we name $1$-division. It is even rarer to find
$M(\K)$-division algorithms, where the condition is now $|\Norm(\xi -
\gamma)| \leq M(\K)$ (see~\cite{Kaiblinger2011} for some results in
that direction). Such algorithms are interesting {\it per se},
and can be used for computing the gcd of two integers of $\K$, though
alternative techniques exist
(see \cite{KaRo1989,AgFr2004,DaFr2005,AgFr2006} and the references
therein). It can be used in the explicit computation of higher
reciprocity laws see \cite{DaFr2005} and the references within
(also~\cite{CaSc2010,JoLaNgNa2021}). Some cryptographic applications
exist, see e.g. \cite{KiLe2017}.

Using centered remainders, we get $M(\Q)=1/2$.
We start our work on $M(\K)$-division algorithms with the case of
imaginary quadratic fields, which is easier than the real one (see
\cite{Morain2026a,Morain2026b,Morain2026c}).

The complete list of norm-Euclidean imaginary quadratic fields is
known for quite a long time.
The proof of \cite[Theorem 246]{HaWr85} yields a 1-division
algorithm for all the cases of $\Q(\sqrt{m})$ that are euclidean,
i.e. $m \in \{-1, -2, -3, -7, -11\}$. When $m \in \{-1,
-2\}$, the algorithms are in fact $M(\K)$-division algorithms (with
respective values of $M(\K)$ being $1/2$ and $3/4$).
We are left with the case where $m \in \{-3, -7, -11\}$. Note that the
case $m=-3$ was already treated in~\cite{Meissner1909}, whereas some
suboptimal versions appeared later (see \cite{Williams1985,ScWi1995}).

\section{Proofs}

Elements of $\OK$ are of the form $u_0 + u_1 \omega$ where $\omega =
(1+\sqrt{-m})/2$ and $u_0$, $u_1$ rational integers. The conjugate of
$U = u_0 + u_1 \omega$ is
$U' = u_0 + u_1 \omega'
= (u_0 + u_1) - u_1 \omega$ and remember that $\Norm(U) = U\cdot U'$.
Put $\ell = (1-m)/4 > 0$; the norm function is $\Norm(x+y\omega) =
f_m(x, y) = x^2 + x y + \ell y^2$ and is clearly positive. We give $M
= M(\K)$ and a point reaching it
in Table~\ref{mMP}.
\begin{table}[hbt]
$$\begin{array}{|c|c|c|c|}\hline
m & -3 & -7 & -11 \\ \hline
M(\K), P & 1/3, (1/3, 1/3) & 4/7, (2/7, 3/7) & 9/11, (3/11, 5/11)
\\
\hline
\end{array}$$
\caption{Table of triples $(m, M, P)$.\label{mMP}}
\end{table}

Suppose we want to compute the Euclidean division of $U = u_0 + u_1
\omega$ by $V = v_0 + v_1 \omega \neq 0$. We start from
$$\frac{U}{V} = \frac{U V'}{\Norm(V)} =
\frac{w_0 + w_1 \omega}{N} =
\left\lfloor \frac{w_0}{N}\right\rceil +
\left\lfloor \frac{w_1}{N}\right\rceil \omega
+ (a + b \omega)
= (q_0 + q_1 \omega) + (a + b \omega)$$
with $q_0$ and $q_1$ rational integers (and forming the quotient $Q$), $a,
b \in \mathcal{S} = [-1/2, 1/2] \times [-1/2, 1/2]$. To finish the
computation, we need to find two rational integers $(\delta_a,
\delta_b)$ such that
$$\Norm(a+\delta_a, b+\delta_b) \leq M$$
so that
$$\frac{U}{V} = (q_0-\delta_a) + (q_1-\delta_b) \omega + r_0 + r_1
\omega$$
and $\Norm(r_0 + r_1\omega) \leq M$, leading to
$$U = Q V + R$$
and $\Norm(R) \leq M \cdot \Norm(V) < \Norm(V)$.

The three cases can be treated the same way, and we assume we fix $(m,
M)$ and set $f = f_m$. From the figures below,
it seems that we need 5 ellipses to cover the square $\mathcal{S}$.
Let us introduce the equations of these ellipses:
\begin{eqnarray}
E_{0, 0}: f(x, y) &\leq & M,\\
E_{0, 1}: f_{0, 1}(x, y) := x^2 + x (y-1) + \ell (y-1)^2 &\leq & M,\\
E_{1, 0}: f_{1, 0}(x, y) := (x-1)^2 + (x-1) y + \ell y^2 &\leq & M,\\
E_{0, -1}: x^2 + x (y+1) + \ell (y+1)^2 &\leq & M,\\
E_{-1, 0}: (x+1)^2 + (x+1) y + \ell y^2 &\leq & M.
\end{eqnarray}
Note that $E_{0, 1}$ and $E_{0, -1}$ are symmetrical w.r.t. $(0, 0)$
using $(x, y) \leftrightarrow (-x, -y)$. Ditto for $E_{1, 0}$ and
$E_{-1, 0}$. We remark that the square $[-1/2, 0] \times [0, 1/2]$ and
its symmetrical can be covered by $E_{0, 0}$ only.
\begin{lemma}
If $a$ and $b$ have opposite signs, then $(a, b) \in {E}_{0,
0}$ that is $f(a, b) \leq M$.
\end{lemma}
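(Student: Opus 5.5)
The plan is to use convexity of $f$ and reduce to checking the vertices of a square. By the symmetry $(x,y)\leftrightarrow(-x,-y)$, which leaves $f(x,y)=x^2+xy+\ell y^2$ unchanged, it is enough to treat $a\le 0\le b$, that is, $(a,b)\in[-1/2,0]\times[0,1/2]$. The case $a\ge 0\ge b$ is the symmetric square and follows immediately.

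First I will note that $f$ is a positive definite quadratic form, since its discriminant $1-4\ell=m<0$. Hence $f$ is a convex function on $\mathbb{R}^2$. A convex function on a compact convex polygon attains its maximum at a vertex, so $\max f$ over $[-1/2,0]\times[0,1/2]$ is the largest of the four vertex values:
$$f(0,0)=0,\quad f(-1/2,0)=\tfrac14,\quad f(0,1/2)=\tfrac{\ell}{4},\quad f(-1/2,1/2)=\tfrac14-\tfrac14+\tfrac{\ell}{4}=\tfrac{\ell}{4}.$$
Since $\ell\ge 1$, this maximum is $\ell/4$.

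It then remains to compare $\ell/4$ with $M$ from Table~\ref{mMP} in each case:
\begin{itemize}
\item for $m=-3$, $\ell=1$ and $1/4\le 1/3$;
\item for $m=-7$, $\ell=2$ and $1/2\le 4/7$;
\item for $m=-11$, $\ell=3$ and $3/4\le 9/11$.
\end{itemize}
So $f(a,b)\le \ell/4\le M$ throughout the square, which gives $(a,b)\in E_{0,0}$.

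There is no real obstacle here. The only point needing care is justifying the reduction to vertices, which is the convexity of a positive definite form. Alternatively, one can bound directly: when $ab\le0$, $f(a,b)\le a^2+\ell b^2-|a||b|$, and then check this on the square.
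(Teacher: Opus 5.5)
Your proof is correct and is essentially the paper's argument: evaluate $f$ at the corners of $[-1/2,0]\times[0,1/2]$, check that these values are at most $M$, and conclude by convexity. The paper phrases the last step via convexity of the sublevel set $E_{0,0}$, whereas you use convexity of $f$ itself, which amounts to the same thing. You also check the corner $(-1/2,1/2)$, which the paper's proof leaves out; it gives $\ell/4$ as well, so the conclusion is unchanged.
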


\medskip
\noindent
{\em Proof:}
We compute the maximal values of $f$ on the square:
$$f(-1/2, 0) = \frac{1}{4}, f(0, 1/2) = \frac{\ell}{4},$$
and these values are less than the possible $M$'s.
We terminate using the convexity of $E_{0, 0}$. $\Box$

\medskip
Proofs are required for covering $\mathcal{S}_0 = [0, 1/2]\times [0,
1/2]$, using the ellipses $E_{0, 0}$, $E_{0, 1}$ and
$E_{1, 0}$. The covering of $[-1/2, 0] \times [-1/2, 0]$ will follow
by symmetry.

\medskip
Let us introduce some points:
$P = (x_P, 1/2)$ is the intersection of
$\mathcal{S}_0$ with $E_{0, 0}$, $Q = (1/2, 1/2)$ and $R = (1/2, y_R)$
is another intersection point of $\mathcal{S}_0$ with $E_{0, 0}$; $S =
(1/2, y_S)$ is the intersection point of $E_{0, 1}$. All these numbers
are exact and signs of algebraic expressions easy to deal with
(see~\cite{Morain2026a} for more explanations).

\begin{lemma}
Fix $\ell \in \{1, 2, 3\}$.
There exists a (unique) point of intersection $I$ to the three ellipses
$E_{0, 0}$, $E_{1, 0}$, $E_{0, 1}$ whose coordinates are
$$I = \left(\frac{\ell}{4 \ell-1}, \frac{2\ell-1}{4\ell-1}\right).$$
\end{lemma}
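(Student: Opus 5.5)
The plan is to work with the boundary curves $f=M$, $f_{1,0}=M$, $f_{0,1}=M$, and to use that two differences of the three quadratics are \emph{linear}. First I will record that, for $\ell\in\{1,2,3\}$ (i.e.\ $m\in\{-3,-7,-11\}$, with $4\ell-1=-m$), the values of Table~\ref{mMP} are all given by the single formula
$$M=\frac{\ell^2}{4\ell-1},$$
since $1/3$, $4/7$ and $9/11$ correspond to $\ell=1,2,3$. This uniform expression lets the three cases be treated at once.

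Next I will expand the translated forms in terms of $f$:
$$f_{1,0}(x,y)=f(x,y)-2x-y+1,\qquad f_{0,1}(x,y)=f(x,y)-x-2\ell y+\ell.$$
A point lying on all three boundaries satisfies $f=f_{1,0}=f_{0,1}=M$. Subtracting the expansions shows that it must then satisfy the linear system
$$2x+y=1,\qquad x+2\ell y=\ell.$$
The determinant of this system is $4\ell-1\neq 0$, so it has exactly one solution. This is where uniqueness comes from: any common point of the three boundaries is forced to be this solution.

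Solving the system, the first equation gives $y=1-2x$. Substituting into the second gives $x(4\ell-1)=\ell$, hence
$$x=\frac{\ell}{4\ell-1},\qquad y=\frac{2\ell-1}{4\ell-1},$$
which is the claimed point $I$.

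It then remains to check existence, namely that this $I$ actually lies on $f=M$. By the linear relations it will then lie on the other two boundaries automatically. The direct computation is
$$f(I)=\frac{\ell^2+\ell(2\ell-1)+\ell(2\ell-1)^2}{(4\ell-1)^2}=\frac{\ell(4\ell^2-\ell)}{(4\ell-1)^2}=\frac{\ell^2}{4\ell-1}=M.$$

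The only step needing care is this last identity. It is the one place where the specific value of $M(\K)$ enters, through the observation $M=\ell^2/(4\ell-1)$; everything else is linear algebra. As a sanity check, I will also note that $I\in\mathcal{S}_0=[0,1/2]\times[0,1/2]$, which holds because $0<\ell/(4\ell-1)\le 1/2$ and $0<(2\ell-1)/(4\ell-1)<1/2$.
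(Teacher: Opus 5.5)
Your proof is correct; it reaches the result by a different elimination than the paper's.

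The paper writes $m=1-4\ell$ and $M=\ell^2/(4\ell-1)$, then factors the resultant of $f-M$ and $f_{0,1}-M$ with respect to $y$. That resultant is proportional to $(4\ell-1)^2x^2-\ell^2$. It yields the two points where the boundaries of $E_{0,0}$ and $E_{0,1}$ meet: $I$ and $\left(-\ell/(4\ell-1),\,2\ell/(4\ell-1)\right)$. The second point must then be discarded because it gives $2x+y=0\neq 1$, so it is not on $E_{1,0}$. The paper leaves this step implicit.

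You use both linear differences $f-f_{1,0}$ and $f-f_{0,1}$ at once. Uniqueness then follows immediately from the determinant $4\ell-1\neq 0$, with no quadratic to solve and no spurious root to reject. Existence reduces to the single identity $f(I)=\ell^2(4\ell-1)/(4\ell-1)^2=M$. Your route is more elementary and makes the ``(unique)'' claim fully explicit.

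What the paper's route buys is generality: the resultant computation works for any pair of conics. Your argument instead relies on the three ellipses being translates of one another, which is exactly why their pairwise differences are linear.
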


\medskip
\noindent
{\em Proof:}
We rewrite all quantities as functions of $\ell$:
$$m = 1-4 \ell, M = {\frac {{\ell}^{2}}{4\,\ell-1}}$$
and we factor the resultant of $f$ and $f_{01}$ w.r.t. $y$, which
leads to the result. $\Box$

\begin{proposition}
The ellipses $E_{0, 0}$, $E_{0, 1}$ and $E_{1, 0}$ cover $\mathcal{S}_0$.
\end{proposition}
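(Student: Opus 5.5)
The idea is to show that at every point of $\mathcal{S}_0$ at least one of $f$, $f_{0,1}$, $f_{1,0}$ is $\leq M$. Put
$$g(x,y)=\min\{f(x,y),f_{0,1}(x,y),f_{1,0}(x,y)\}.$$
The plan is to prove that $\max_{\mathcal{S}_0} g \leq M$. The key observation is that the three quadratic functions share the same quadratic part, so their pairwise differences are affine:
$$f-f_{0,1}=x+2\ell y-\ell,\qquad f-f_{1,0}=2x+y-1,$$
and $f_{0,1}-f_{1,0}$ is the difference of these two. The three lines where two of the functions are equal meet at the single point where all three are equal. By the preceding lemma this point is $I=\bigl(\ell/(4\ell-1),(2\ell-1)/(4\ell-1)\bigr)$, and there the common value is $M$. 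These lines cut $\mathcal{S}_0$ into at most three convex polygons. On each polygon $g$ coincides with one of $f$, $f_{0,1}$, $f_{1,0}$.

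First I will check that $I\in\mathcal{S}_0$: we have $\ell/(4\ell-1)\le 1/2$ and $(2\ell-1)/(4\ell-1)\le 1/2$ for $\ell\ge 1$. Next I will list the vertices of the three cells. These are:
\begin{itemize}
\item the corners $(0,0)$, $(1/2,0)$, $(0,1/2)$ and $Q=(1/2,1/2)$ of $\mathcal{S}_0$;
\item the point $I$;
\item the points where the two relevant bisector lines leave $\mathcal{S}_0$.
\end{itemize}
The line $f=f_{1,0}$ is $x=(1-y)/2$. It meets the boundary at $(1/2,0)$ and at $(1/4,1/2)$. The line $f=f_{0,1}$ is $y=(\ell-x)/(2\ell)$. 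It meets the boundary at $(0,1/2)$ and at $(1/2,(2\ell-1)/(4\ell))$. Only the half-lines issuing from $I$ that actually separate the cells matter. So I will identify, from the signs of the affine differences, which function is the minimum on each cell. I expect $f$ near the origin, $f_{1,0}$ near $(1/2,y)$ for large $y$ including $Q$, and $f_{0,1}$ near $(x,1/2)$.

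Since each of $f$, $f_{0,1}$, $f_{1,0}$ is a positive definite quadratic, hence convex, its maximum on a convex polygon is attained at a vertex. It therefore suffices to evaluate the relevant function at finitely many explicit rational points and compare with $M=\ell^2/(4\ell-1)$. Sample checks are:
\begin{itemize}
\item $f(0,0)=0$ and $f(1/2,0)=1/4\le M$;
\item $f_{1,0}(Q)=\ell/4\le \ell^2/(4\ell-1)$, which holds since $4\ell-1\le 4\ell$;
\item at $I$ all three functions equal $M$.
\end{itemize}

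The remaining boundary vertices, such as $(1/4,1/2)$ and $(1/2,(2\ell-1)/(4\ell))$, give rational functions of $\ell$. I will verify these either symbolically in $\ell$ or directly for $\ell\in\{1,2,3\}$. The main obstacle is the bookkeeping here: determining correctly which bisector half-lines bound which cell inside $\mathcal{S}_0$, and hence which function and which vertices to test. A wrong assignment could test the wrong ellipse at a vertex. To guard against this, at each vertex I will simply verify that $g$, the minimum of the three values, is $\leq M$, and separately check that each cell is convex. Since $g$ restricted to each cell is a single convex quadratic, this suffices. This also identifies the points $P,R,S$ introduced above as the boundary intersections used in the figures.
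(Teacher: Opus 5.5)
Your argument is correct, and it takes a different route from the paper's.

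\textbf{The paper's route.} The paper treats $\ell=1$ separately, using the symmetry $x\leftrightarrow y$ and a direct inequality. For $\ell=2,3$ it isolates the part $PQRI$ of $\mathcal{S}_0$ not covered by $E_{0,0}$ and cuts it as $PQSI\cup SIR$. It then concludes by convexity together with a chord-versus-arc comparison. Its points $P,R,S$ are intersections of $\partial E_{0,0}$ or $\partial E_{0,1}$ with the edges of the square, and they have quadratic-irrational coordinates.

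\textbf{Your route.} You use the fact that $f,f_{0,1},f_{1,0}$ differ by affine functions. Hence the set where each one is the minimum is a convex polygon, and everything reduces to evaluations at rational vertices, uniformly in $\ell$.

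\textbf{Completing your bookkeeping.} It closes easily:
\begin{itemize}
\item On the right edge $f-f_{1,0}=y\ge 0$, and on the top edge $f-f_{0,1}=x\ge 0$.
\item The third bisector $f_{0,1}=f_{1,0}$, namely $x=(2\ell-1)y-(\ell-1)$, passes through both $I$ and $Q$.
\item Hence the cells are the quadrilateral $(0,0),(1/2,0),I,(0,1/2)$ for $f$, the triangle $(1/2,0),Q,I$ for $f_{1,0}$, and the triangle $(0,1/2),I,Q$ for $f_{0,1}$.
\item The only values that occur at these vertices are $0$, $1/4$, $\ell/4$ and $M$.
\item The inequalities $1/4\le M$ and $\ell/4\le M$ follow from $(2\ell-1)^2\ge 0$ and $4\ell-1<4\ell$ respectively.
\end{itemize}

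\textbf{Comparison.} Your proof is shorter and avoids both the square roots and the chord-versus-arc step. It also gives the bound $\ell^2/(4\ell-1)$ for every $\ell\ge 1$ at once, with no separate case for $\ell=1$. It also yields a direct nearest-of-three-points rule from the signs of $2a+b-1$ and $a+2\ell b-\ell$, which never needs $F$ or $M$. The paper's version is more pictorial and is tied to the specific figures.

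\textbf{Two minor corrections.}
\begin{itemize}
\item The exit points $(1/4,1/2)$ and $(1/2,(2\ell-1)/(4\ell))$ that you list are ends of non-separating half-lines, so they are not vertices of any cell. Meanwhile you did not compute the exits of the third bisector, which is the one that actually bounds two cells. Your ``evaluate $g$ at all candidates'' safeguard works only because that exit happens to be the corner $Q$.
\item These points are not the paper's $P,R,S$. For instance, when $\ell=2$ one has $R\approx(0.5,0.295)$, not $(1/2,3/8)$.
\end{itemize}
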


\medskip
\noindent
{\em Proof:}
1) First, suppose $m=-3$ and therefore $M=1/3$.
There is a symmetry $(x, y) \leftrightarrow (y, x)$
which swaps $E_{0, 1}$ and $E_{1, 0}$, so that we may assume that $y
\geq x$. Suppose that there exists $(x, y)$ such that $f(x, y) > 1/3$
and $f_{0, 1}(x, y) > 1/3$. We deduce
$$3 y^2 \geq x^2 + x y + y^2 > 1/3, \quad
3 y^2 - 3 y + 1 > 1/3$$
that is $y > 1/3$ and $y^2-y+2/9 = (y-1/3) (y-2/3) > 0$, leading to a
contradiction. Hence $(x, y)$ belongs to one of the two ellipses.

\begin{figure}[hbt]
\begin{tikzpicture}[>=latex,scale=2.25]
\draw[->] (0, -1.8) -- (0, 1.8);
\draw[->] (-2, 0) -- (2, 0);
\draw[-] (-0.5, -0.5) rectangle (0.5, 0.5);
\draw[dashed] (-1, 1) -- (1, -1); \node at (0.9, -0.7) {$\mathcal{L}$};
\draw[dashed] (-1, -1) -- (1, 1);
\draw (0, 0) circle [x radius=0.471, y radius=0.816, rotate=45];
 \node (E00) at (-0.3, 0.15) {$E_{0, 0}$};
\draw (0, 1) circle [x radius=0.471, y radius=0.816, rotate=45];
 \node (E01) at (-0.4, 1.4) {$E_{0, 1}$};
\draw (0, -1) circle [x radius=0.471, y radius=0.816, rotate=45];
 \node (E0m1) at (0.4, -1.4) {$E_{0, -1}$};
\draw (-1, 0) circle [x radius=0.471, y radius=0.816, rotate=45];
 \node (Em10) at (-1.4, 0.4) {$E_{-1, 0}$};
\draw (1, 0) circle [x radius=0.471, y radius=0.816, rotate=45];
 \node (E10) at (1.4, -0.4) {$E_{1, 0}$};
 \node (0) at (0, 0) {$\bullet$};
 \node (am) at (-1, 0) {$\bullet$};
 \node (ap) at (1, 0) {$\bullet$};
 \node (bm) at (0, -1) {$\bullet$};
 \node (bp) at (0, 1) {$\bullet$};
 \node (P) at (0.132, 0.5) {$\circ$}; \node at (0.132, 0.7) {$P$};
 \node (Q) at (0.5, 0.5) {$\circ$}; \node at (0.5, 0.75) {$Q$};
 \node (R) at (0.5, 0.132) {$\circ$}; \node at (0.7, 0.132) {$R$};
 \node (S) at (0.5, 0.368) {$\circ$}; \node at (0.7, 0.368) {$S$};
 \node (I) at (0.333, 0.333) {$\circ$}; \node at (0.1, 0.25) {$I$};
\end{tikzpicture}
\caption{The case $m=-3$}
\end{figure}
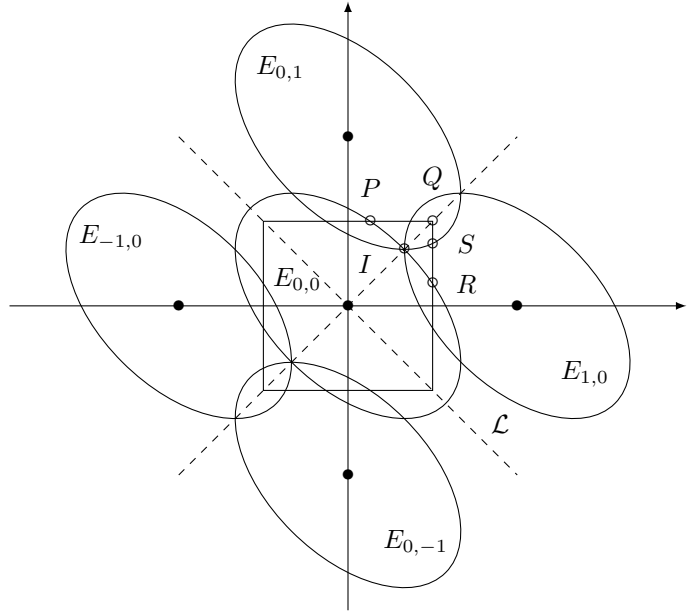

\medskip
\noindent
2) Fix a pair $(m, M) \in \{(-7, 4/7), (-11, 9/11)\}$.
From Figure~\ref{case7}, we can see that the ellipse $E_{0, 0}$ covers a
large part of $\mathcal{S}_0$, but the closed region
$PQRIP$, which we cut as $PQSI \cup SIR$ (see Figure~\ref{case7x2} and
Figure~\ref{case11x2}).
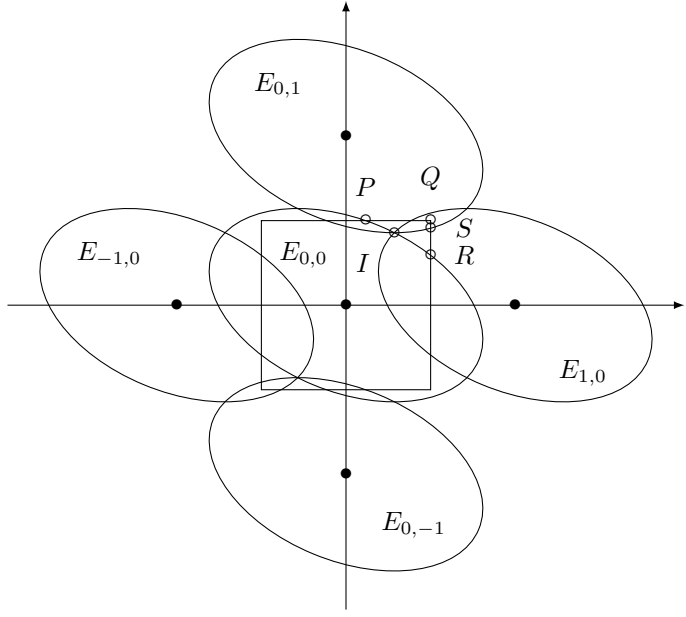
\begin{figure}[hbt]
\begin{tikzpicture}[>=latex,scale=2.25]
\draw[->] (0, -1.8) -- (0, 1.8);
\draw[->] (-2, 0) -- (2, 0);
\draw[-] (-0.5, -0.5) rectangle (0.5, 0.5);
\draw (0, 0) circle [x radius=0.849, y radius=0.509, rotate=-22.5];
 \node (E00) at (-0.25, 0.3) {$E_{0, 0}$};
\draw (0, 1) circle [x radius=0.849, y radius=0.509, rotate=-22.5];
 \node (E01) at (-0.4, 1.3) {$E_{0, 1}$};
\draw (0, -1) circle [x radius=0.849, y radius=0.509, rotate=-22.5];
 \node (E0m1) at (0.4, -1.3) {$E_{0, -1}$};
\draw (-1, 0) circle [x radius=0.849, y radius=0.509, rotate=-22.5];
 \node (Em10) at (-1.4, 0.3) {$E_{-1, 0}$};
\draw (1, 0) circle [x radius=0.849, y radius=0.509, rotate=-22.5];
 \node (E10) at (1.4, -0.4) {$E_{1, 0}$};
 \node (0) at (0, 0) {$\bullet$};
 \node (am) at (-1, 0) {$\bullet$};
 \node (ap) at (1, 0) {$\bullet$};
 \node (bm) at (0, -1) {$\bullet$};
 \node (bp) at (0, 1) {$\bullet$};
 \node (P) at (0.116, 0.5) {$\circ$}; \node at (0.116, 0.7) {$P$};
 \node (Q) at (0.5, 0.5) {$\circ$}; \node at (0.5, 0.75) {$Q$};
 \node (R) at (0.5, 0.295) {$\circ$}; \node at (0.7, 0.295) {$R$};
 \node (S) at (0.5, 0.455) {$\circ$}; \node at (0.7, 0.455) {$S$};
 \node (I) at (0.286, 0.429) {$\circ$}; \node at (0.1, 0.25) {$I$};
\end{tikzpicture}
\caption{The case $m=-7$. \label{case7}}
\end{figure}
By elementary computations, we prove that $P$, $Q$, and $S$
are all inside $E_{0, 1}$; since $I$ is on $E_{0, 1}$, this proves
that the polygone $PQSI$ is inside $E_{0, 1}$ by convexity.
In the same way, $R$ is on $E_{0, 0}$ and
inside $E_{10}$. Since the line $IR$ is below the arc $\arc{IR}$ and $IS$ is
above the arc $\arc{IS}$, this proves that the region $IRS$ is
inside $E_{1, 0}$. $\Box$

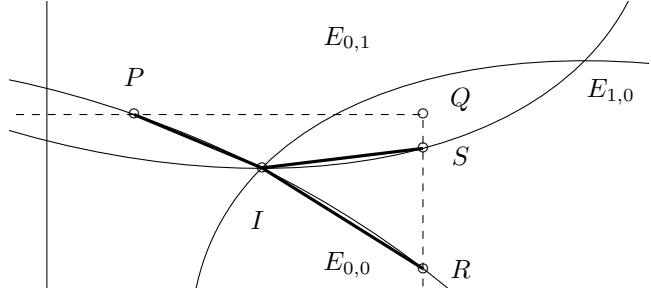
\begin{figure}[hbt]
\begin{tikzpicture}[>=latex,scale=10]
\clip (-0.05, 0.27) rectangle (0.8, 0.65);
\draw[->] (0, -1) -- (0, 2);
\draw[->] (-1, 0) -- (2, 0);
\draw[dashed] (-0.5, -0.5) rectangle (0.5, 0.5);
\draw (0, 0) circle [x radius=0.849, y radius=0.509, rotate=-22.5];
 \node (E00) at (0.4, 0.3) {$E_{0, 0}$};
\draw (0, 1) circle [x radius=0.849, y radius=0.509, rotate=-22.5];
 \node (E01) at (0.4, 0.6) {$E_{0, 1}$};
\draw (1, 0) circle [x radius=0.849, y radius=0.509, rotate=-22.5];
 \node (E10) at (0.75, 0.53) {$E_{1, 0}$};
 \node (0) at (0, 0) {$\bullet$};
 \node (ap) at (1, 0) {$\bullet$};
 \node (bp) at (0, 1) {$\bullet$};
 \node (I) at (0.286, 0.429) {$\circ$}; \node at (0.28, 0.36) {$I$};
 \node (P) at (0.116, 0.5) {$\circ$}; \node at (0.116, 0.55) {$P$};
 \node (Q) at (0.5, 0.5) {$\circ$}; \node at (0.55, 0.52) {$Q$};
 \node (R) at (0.5, 0.295) {$\circ$}; \node at (0.55, 0.295) {$R$};
 \node (S) at (0.5, 0.455) {$\circ$}; \node at (0.55, 0.445) {$S$};
  \draw[-,very thick] (0.5, 0.455) -- (0.286, 0.429);
  \draw[-,very thick] (0.116, 0.5) -- (0.286, 0.429);
  \draw[-,very thick] (0.5, 0.295) -- (0.286, 0.429);
\end{tikzpicture}
\caption{The case $m=-7$; zoom on the critical region. \label{case7x2}}
\end{figure}

\begin{figure}[hbt]
\begin{tikzpicture}[>=latex,scale=10]
\clip (-0.05, 0.32) rectangle (0.8, 0.6);
\draw[->] (0, -2) -- (0, 2);
\draw[->] (-2, 0) -- (2, 0);
\draw[dashed] (-0.5, -0.5) rectangle (0.5, 0.5);
\draw (0, 0) circle [x radius=0.963, y radius=0.512, rotate=-13.28];
 \node (E00) at (0.4, 0.34) {$E_{0, 0}$};
\draw (1, 0) circle [x radius=0.963, y radius=0.512, rotate=-13.28];
 \node (E10) at (0.75, 0.5) {$E_{1, 0}$};
\draw (0, 1) circle [x radius=0.963, y radius=0.512, rotate=-13.28];
 \node (E01) at (0.4, 0.55) {$E_{0, 1}$};
 \node (0) at (0, 0) {$\bullet$};
 \node (am) at (-1, 0) {$\bullet$};
 \node (ap) at (1, 0) {$\bullet$};
 \node (bm) at (0, -1) {$\bullet$};
 \node (bp) at (0, 1) {$\bullet$};
 \node (I) at (0.273, 0.455) {$\circ$}; \node at (0.26, 0.4) {$I$};
 \node (P) at (0.111, 0.5) {$\circ$}; \node at (0.111, 0.55) {$P$};
 \node (Q) at (0.5, 0.5) {$\circ$}; \node at (0.55, 0.51) {$Q$};
 \node (R) at (0.5, 0.360) {$\circ$}; \node at (0.55, 0.295) {$R$};
 \node (S) at (0.5, 0.474) {$\circ$}; \node at (0.55, 0.445) {$S$};
  \draw[-,very thick] (0.273, 0.455) -- (0.111, 0.5);
  \draw[-,very thick] (0.273, 0.455) -- (0.5, 0.474);
  \draw[-,very thick] (0.273, 0.455) -- (0.5, 0.360);
\end{tikzpicture}
\caption{The case $m=-11$; the critical region. \label{case11x2}}
\end{figure}
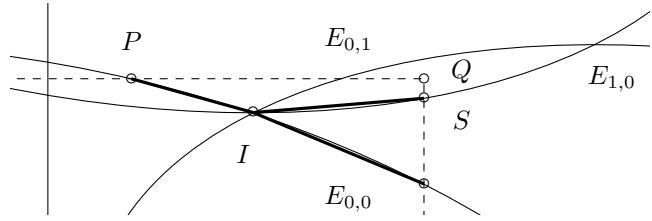

\section{Algorithms}

The approach of Meissner for $m=-3$ consists in easy inequalities on
$(a, b) \in \mathcal{S}_0$ to find a closest neighbor in $\Z[\omega]$,
among a list of four of them. In our work, we find an ellipsis
containaing $(a, b)$ by evaluating the defining equations on $(a, b)$
and waiting for one of these to be negative. We begin with $f$ since
$E_{0, 0}$ covers a large part of $\mathcal{S}_0$.

We note that:
$$f_{0, 1}(x, y) = f(x, y) - x - \ell (2y-1), \quad
f_{1, 0}(x, y) = f(x, y) - 2 x - y + 1;$$
$$f_{0, -1}(x, y) = f(x, y)+ x + \ell (2 y + 1), \quad
f_{-1, 0}(x, y) = f(x, y) + 2 x + y + 1.$$
The idea is to test the value of the various expressions with
incremental computations, to decrease the complexity of the
computations. For $(a, b) \in \mathcal{S}_0$, we first compute $F =
f(a, b) = a (a+b) + \ell b^2$. If
$F \leq M$, we are done. Otherwise, we need to check whether
$$f_{1, 0}(a, b) \leq M$$
that is
$$F-M \leq a + \ell(2 b -1).$$
We proceed with the third case if needed.
This gives Algorithm~\ref{algo2}. Some shortcuts are
possible for $\ell = 1$.

\begin{algorithm}[hbt]
\caption{$M$-euclidean division for $m \in \{-3, -7, -11\}$; condensed
version \label{algo2}}
\Function{QED($\ell$, $f$, $M$, $a$, $b$)}{
    \Input{$a, b \in \mathcal{S}$}
    \Output{a pair of integers $(\delta_a, \delta_b)$ s.t. $\Norm(a+\delta_a,
    b+\delta_b) \leq M$}
  \If{$a$ and $b$ are of opposite signs}{
  	  \Return{$(0, 0)$}\;
  }
  \tcp{$a$ and $b$ have the same sign}
  \tcp{compute $F = \Norm(a+b\omega)$}
  $F \leftarrow a (a+b) + \ell b^2$\;
  \If{$F \leq M$}{
  	 \tcp{$(a, b) \in E_{0, 0}$}
  	 \Return{$(0, 0)$}\;
  }
  $F \leftarrow F-M$\;
  $sgn \leftarrow Sign(a)$\;
  $a \leftarrow |a|$; $b \leftarrow |b|$\;
  \If{$F \leq 2 a+b-1$}{
  	 \tcp{$(a, b) \in E_{0, 1}$}
	$\sigma \leftarrow (-1, 0)$\;
  }
  \ElseIf{$F \leq a + \ell (2b-1)$}{
  	 \tcp{$(a, b) \in E_{1, 0}$}
  	 $\sigma \leftarrow (0, -1)$\;
  }
  \Else{
	ERROR\;
  }
  \If{$sgn = -1$}{
  	 \tcp{Fix shifts}
	 $\sigma \leftarrow -\sigma$\;
  }
  \Return{$\sigma$}\;
}
\end{algorithm}

\section{Conclusion}

In a work in preparation~\cite{Morain2026e}, we study optimal division
algorithms that can be deduced from~\cite{Cerri2007,Lezowski2014}.

\medskip
\noindent
{\bf Acknowledments.} The author wants to thank A.~Hermann of the
Information and Scientific editions (IES) at INRIA
Saclay for finding a lot of difficult-to-locate-and-scan articles,
including~\cite{Meissner1909}.

\def\noopsort#1{}\ifx\bibfrench\undefined\def\biling#1#2{#1}\else\def\biling#1#2{#2}\fi\def\Inpreparation{\biling{In
  preparation}{en
  pr{\'e}paration}}\def\Preprint{\biling{Preprint}{pr{\'e}version}}\def\Draft{\biling{Draft}{Manuscrit}}\def\Toappear{\biling{To
  appear}{\`A para\^\i tre}}\def\Inpress{\biling{In press}{Sous
  presse}}\def\Seealso{\biling{See also}{Voir
  {\'e}galement}}\def\Editor{\biling{Ed.}{R{\'e}d.}}

\end{document}